\documentclass[letterpaper, 10 pt, conference]{ieeeconf} 

\IEEEoverridecommandlockouts
\overrideIEEEmargins

\usepackage{times}
\usepackage[labelformat=simple]{subcaption}
\usepackage[utf8]{inputenc} 
\usepackage[T1]{fontenc}    
\usepackage{url}            
\usepackage{booktabs}       
\usepackage{amsfonts}       
\usepackage{nicefrac}       
\usepackage{microtype}      
\usepackage{balance}      
\usepackage[font=footnotesize]{caption}
\usepackage{wrapfig}
\usepackage{bibunits}
\usepackage{graphics}


\usepackage{graphicx,caption,subcaption,xspace}
\usepackage{amsmath,amssymb,stmaryrd,mathtools}
\usepackage{algorithm}
\usepackage{algpseudocode}
\usepackage{float}
\usepackage{acronym}
\usepackage{comment}
\usepackage{color}
\usepackage{units}
\usepackage{textcomp}

\usepackage{multirow}
\usepackage[capitalise]{cleveref}

\usepackage{tikz}

\usepackage{cite}
\usepackage{url}

\usepackage{booktabs}


\crefname{equation}{}{} 
\crefname{section}{Sec.}{Sec.}


\newcommand{\sinit}{s_{0}}
\newcommand{\wifinal}{W_{i}(x_{i},0)} 
\newcommand{\wis}{W_{i}(x_{i},s)} 
\newcommand{\wiinit}{W_{i}(x_{i},s_{0})} 
\newcommand{\si}{S_{i}} 
\newcommand{\sic}{S_{i}^{c}} 
\newcommand{\xii}{x_{i}} 

\newcommand{\R}{\mathbb{R}} 


\newcommand{\argmax}{\operatornamewithlimits{argmax}}

\newtheorem{theorem}{Theorem}


\algdef{SE}[SUBALG]{Indent}{EndIndent}{}{\algorithmicend\ }%
\algtext*{Indent}
\algtext*{EndIndent}




\title{\large \bf
Guaranteed-Safe Approximate Reachability via State Dependency-Based Decomposition
}

\author{Anjian Li$^1$, Mo Chen$^1$
\thanks{$^1$School of Computing Science, Simon Fraser University, Burnaby, BC, Canada, \{anjianl, mochen\}@sfu.ca
}
}

\begin{document}

\maketitle
\thispagestyle{empty}
\pagestyle{empty}

\begin{abstract}
Hamilton Jacobi (HJ) Reachability is a formal verification tool widely used in robotic safety analysis. 
Given a target set as unsafe states, a dynamical system is guaranteed not to enter the target under the worst-case disturbance if it avoids the Backward Reachable Tube (BRT).
However, computing BRTs suffers from exponential computational time and space complexity with respect to the state dimension. 
Previously, system decomposition and projection techniques have been investigated, but the trade off between applicability to a wider class of dynamics and degree of conservatism has been challenging. 
In this paper, we propose a State Dependency Graph to represent the system dynamics, and decompose the full system where only dependent states are included in each subsystem, and ``missing'' states are treated as bounded disturbance.
Thus for a large variety of dynamics in robotics, BRTs can be quickly approximated in lower-dimensional chained subsystems with the guaranteed-safety property preserved.
We demonstrate our method with numerical experiments on the 4D Quadruple Integrator, and the 6D Bicycle, an important car model that was formerly intractable.
\end{abstract}

\section{Introduction} \label{sec:intro}

As the popularity of mobile autonomous systems rapidly grows in daily life, the importance of safety of these systems substantially increases as well. 
Especially formal verification is urgently needed for safe-critical systems like self-driving cars, drones, and etc., where any crash causes serious damage.

Optimal control and differential game theory are well-studied for controlled nonlinear systems experiencing adversarial disturbances \cite{barron1990differential,mitchell2005time,margellos2011hamilton,bokanowski2011minimal}.
Reachability analysis is a powerful tool to characterize the safe states of the systems and provides safety controllers \cite{kurzhanski2002ellipsoidal, frehse2011spaceex,althoff2013reachability}. 
It has been widely used in trajectory planning \cite{parzani2018hamilton, herbert2017fastrack,singh2018robust,landry2018reach}, air traffic management \cite{chen2015safe,chen2017reachability}, and multi-agent collision avoidance \cite{chen2016multi,dhinakaran2017hybrid}.
In a collision avoidance scenario, the Backward Reachable Tube (BRT) represents the states from which reaching the unsafe states is inevitable within a specified time horizon under the worst-case disturbances \cite{chen2018hamilton}.

There is a variety of reachability analysis methods. 
\cite{frehse2011spaceex, althoff2010computing} focus on analytic solutions, which are fast to compute but require specific types of targets, e.g. polytopes or hyperplanes. 
Some other techniques have strong assumptions such as linear dynamics \cite{kurzhanski2002ellipsoidal,maidens2013lagrangian}, or dynamics that do not include any control and disturbance \cite{darbon2016algorithms}.
HJ reachability is the most flexible method that accommodates nonlinear dynamics and arbitrary shapes of target sets; however, such flexibility requires level set methods \cite{mitchell2000level, osher2004level}, in which value functions are stored on grid points in the discretized state space. 
Such an approach suffers from the curse of dimensionality.

\begin{figure}[htbp]
\begin{subfigure}[b]{.4\linewidth}
\centering
\parbox[][1.8cm][c]{\linewidth}{
\includegraphics[scale=0.35]{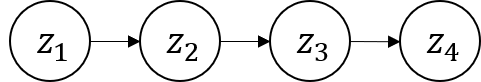}}
\caption{}
\label{fig:sdg_4dint}
\end{subfigure}
%
%
\begin{subfigure}[b]{.6\linewidth}
\centering
\includegraphics[scale=0.35]{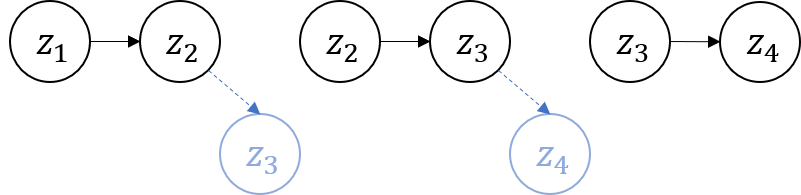}
\caption{}
\label{fig:decompose_sdg_4dint}
\end{subfigure}
\caption{The State Dependency Graph \ref{fig:sdg_4dint} and the decomposed State Dependency Graph \ref{fig:decompose_sdg_4dint} for 4D Integrator. The blue vertices and edges indicate the missing states and their dependencies. Our method approximates the BRT of the full-dimensional system in Fig. \ref{fig:sdg_4dint} by concurrently computing a sequence of BRTs for the subsystems in Fig. \ref{fig:decompose_sdg_4dint}.}
\vspace{-2em}
\end{figure}


Several approaches have been proposed to reduce the computation burden for HJ reachability. 
Projection methods were used to approximate BRTs in lower-dimension spaces \cite{mitchell2003overapproximating}, but the results could be overly conservative at times. 
Integrator structures were analyzed in \cite{mitchell2011scalable} to reduce dimensionalilty by one.
The state decoupling disturbances method \cite{chen2016fast} treated certain states as disturbance, thus systems can be decoupled to lower dimension and computed for goal-reaching problems.
An exact system decomposition method \cite{chen2018decomposition} could reduce computation burden without incurring approximation errors, but  was only applicable if self-contained systems existed.

In this paper, we propose a novel system decomposition method that exploits state dependency information in the system dynamics in a more sophisticated, multi-layered manner compared to previous works. 
Our approach represents the system dynamics using a directed dependency graph, and decompose the full system into subsystems based on this graph, which allows BRT over-approximations to be tractable yet not overly conservative.
``Missing'' states in each subsystem are treated as disturbances bounded by other concurrently computed BRT approximations.

Our method is applicable to a large variety of system dynamics, especially those with a loosely coupled, ``chained'' structure.
Combining our method with other decomposition techniques could achieve more dimensionality reduction.
Beyond that, we also offers a flexible way of adjusting the trade-off between computational complexity and degree of conservatism, which is adaptable to the amount of computational resources available.


\textbf{Organization}: 
In Section \ref{sec:background}, we introduce the background on HJ reachability and projection operations.
In Section \ref{sec:chain_model}, we present state dependency graph and the novel decomposition method for dynamical systems to compute BRT over-approximations.
Proof of correctness and other discussion are also offered.
In Section \ref{sec:exp}, we present numerical results for the 4D Quadruple Integrator and 6D Bicycle.
In Section \ref{sec:conclusion}, we make brief conclusions.

\section{Background} \label{sec:background}

HJ reachability is a powerful tool for guaranteed-safety analysis of nonlinear system dynamics under adversarial inputs, compatible with arbitrary shapes of target sets.
Here, we present the necessary setup for HJ reachability computation, and introduce the projection operations used in our method.

\subsection{System Dynamics}

Let $z \in \R^{n}$ represent the state and $s$ represent time. The system dynamics is described by the following ODE:
\begin{align}\label{equ:nonlinear_dyns}
    &\dot{z} = \frac{\mathrm{d} z}{\mathrm{d} s} = f(z,u,d), & s \in [\sinit, 0], \sinit \leq 0 \quad
     u \in \mathcal U, d \in \mathcal D
\end{align}

The $u(\cdot)$ and $d(\cdot)$ denote the control function and disturbance function.
For any fixed $u$ and $d$, the dynamics $f: \mathbb{R}^{n} \times \mathcal{U} \times \mathcal{D} \rightarrow \mathbb{R}^{n}$ is assumed to be uniformly continuous, bounded and Lipschitz continuous with respect to all arguments; thus, a unique solution to \eqref{equ:nonlinear_dyns} exists given $u$ and $d$. 
With opposing objectives, the control and disturbance are modeled as opposing players in a differential game \cite{mitchell2005time}.

The solution for \eqref{equ:nonlinear_dyns}, or trajectory, is denoted as $\zeta(s; z, \sinit, u(\cdot), d(\cdot)) : [\sinit,0] \rightarrow \mathbb{R}^{n}$, which starts from state $z$ at time $\sinit$ under control $u$ and disturbance $d$. 
$\zeta$ satisfies \eqref{equ:nonlinear_dyns} almost everywhere with initial condition $\zeta(\sinit; z, \sinit, u(\cdot), d(\cdot)) = z$.

\subsection{Hamilton-Jacobi Reachability}

Given a target $\mathcal{T}$ to avoid, the BRT is the set of states from which there exists a disturbance such that entering the target during the time horizon of duration $\left | \sinit \right |$ is inevitable despite the best control. 
This is illustrated in Fig. \ref{fig:target_brt}.

\begin{figure}
    \vspace{5pt}
    \centering
    \includegraphics[width=0.5\linewidth]{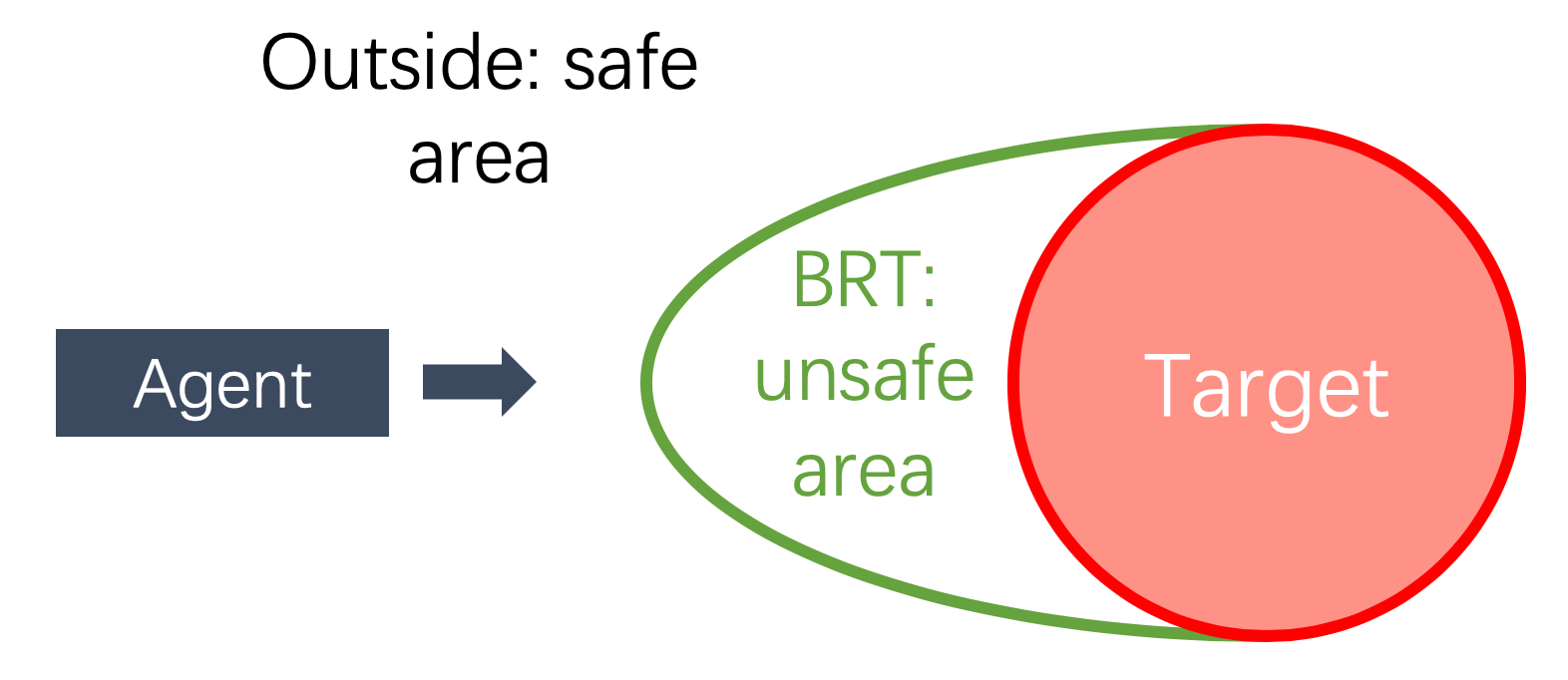}
    \caption{An example of a target and its BRT. To avoid the target, the agent should stay outside of the BRT.}
    \label{fig:target_brt}
    \vspace{-1em}
\end{figure}

The definition of the minimal BRT $\bar{\mathcal{A}}(s)$ is as follows:
\begin{align}
    \bar{\mathcal{A}}(s) = \{z: \exists d(\cdot) \in \mathbb{D}, \forall u(\cdot) \in \mathbb{U}, \exists s \in [\sinit,0],
    \nonumber \\
    \zeta(s;z,\sinit,u(\cdot)) \in \mathcal{T} \}
\end{align}

In the HJ formulation, the target set is represented as the sub-level set of some function $l(z)$, where $z\in\mathcal T \Leftrightarrow l(z) \le 0$. 
Then, the BRT in HJ reachability becomes the sub-level set of a value function $V(z,s)$ defined as below:
\begin{equation}\label{equ:final_valfunc}
    V(z,s):=\min_{d(\cdot)} \max_{u(\cdot)} \min_{s \in [s_{0}, 0]} l(\zeta(0; z,s,u(\cdot), d(\cdot)))
\end{equation}

The value function $V(z,s)$ can be obtained as the viscosity solution of the following HJ partial differential equation:
\begin{align}\label{equ:hj_equation}
    \min \{ D_{s} V(z,s)+H(z, \nabla V(z,s)), V(z,0)-V(z,s) \}=0, \nonumber \\
    V(z,0)=l(z), s \in [\sinit, 0]
\end{align}
\begin{equation}\label{equ:hamiltonian}
    H(z, \nabla V(z, s)) = \min_{d(\cdot)} \max_{u(\cdot)} \nabla V(z,s) ^\top f(z,u)
\end{equation}

The level set method \cite{mitchell2000level} is a computation tool to solve \eqref{equ:hj_equation} in the discretized state space. 
Recently, toolboxes \cite{mitchell2007toolbox, Tanabe} have been developed to numerically compute BRTs using the level set method.

\subsection{Projection}

We present two kinds of projection operations for manipulating value functions of BRTs between high dimension spaces and their low dimension subspaces. 
Let $V(x, y): \mathbb{R}^{n_{x}+n_{y}} \rightarrow \mathbb{R}$ be a value function in $n_{x}+n_{y}$ dimension space, where $x \in \mathbb{R}^{n_{x}}$ and $y \in \mathbb{R}^{n_{y}}$. 
Given a BRT represented by $V(x, y)$, we define the projected BRT in its $n_{x}$ dimension subspace by the value function $W(x) : \mathbb{R}^{n_{x}} \rightarrow \mathbb{R}$, where
\begin{equation}\label{equ:projection}
   W(x) =  \min_{y} V(x, y)
\end{equation} 

Given $W(x)$ in $n_{x}$-dimensional space, we define the value function $V(x,y)$ representing the back projected BRT as
\begin{equation}\label{equ:back_projection}
     V(x,y) = W(x), \ \forall y \in \mathbb{R}^{n_{y}}.
\end{equation} 


\section{Methodology} \label{sec:chain_model}







Reachability analysis relies on an accurate model of the robotic system under consideration, but accurate models tend to be high-dimensional. 
This often makes HJ reachability intractable due to the curse of dimensionality. 

In this section, we first present a novel method to decompose the dynamical system in \eqref{equ:nonlinear_dyns} into several coupled subsystems based on a State Dependency Graph. 
Then, we provide an algorithm for computing BRTs with these subsystems. 
Finally, we prove that our method produces conservative BRT approximations that guarantee safety, analyze its computational time and space complexity, and discuss the constraints for target sets.

\subsection{System decomposition}\label{sec:decomposition}

\subsubsection{\textbf{State Dependency Graph}}

Let $S$ be the set of states, $S=\{ z_{i} \}_{i=1}^n$.
We first define the notion of ``state dependency'': for some states $z_{i}, z_{j} \in S$, $z_{i}$ depends on $z_{j}$ in $f(z,s)$ means $\frac{\mathrm{d} z_{i}}{\mathrm{d} s}$ is a function of $z_{j}$.

In order to clarify these dependency relationships between each state of $S$ in $f(z,s)$, we define a directed State Dependency Graph $G = (S, E)$. 
The set of vertices is denoted $S$, and contains all state variables. 
If some state component $z_{i} \in S$ depends on $z_{j}$ in $f(z,s)$, then the graph $G$ would have a directed edge from $z_{i}$ to $z_{j}$, $(z_i, z_j) \in E$.

Often, high-dimensional dynamics contain chains of integrators. 
Thus, we consider a running example: 4D Quadruple Integrator, whose 
dynamics are as follows:
\begin{equation}\label{equ:4dint}
\begin{bmatrix}
\dot{z_{1}}\\ 
\dot{z_{2}}\\ 
\dot{z_{3}}\\ 
\dot{z_{4}}
\end{bmatrix}
=
\begin{bmatrix}
z_{2} + d\\ 
z_{3}\\ 
z_{4}\\ 
u
\end{bmatrix},  u \in \mathcal U, d \in \mathcal D,
\end{equation}

\noindent where $u$ and $d$ denote the control and disturbance.
For the system in \eqref{equ:4dint}, $S= \{ z_{1}, z_{2}, z_{3}, z_{4} \}$, $E=\{(z_{1}, z_{2}),(z_{2}, z_{3}),(z_{3}, z_{4})\}$, and its State Dependency Graph $G = (V, E)$ is shown in Fig. \ref{fig:sdg_4dint}.

    
\subsubsection{\textbf{Choosing coupled subsystems}}

Given the State Dependency Graph $G$ and the computational space constraint that each subsystem can be at most $p$-dimensional, we can decompose the full system $S$ into several coupled subsystems $S_1, S_2, \ldots, S_m$ whose states denoted as $x_{1}, x_{2},\ldots,x_{m}$ respectively, with the following properties:
\begin{itemize}
    \item In every subsystem, each state should depend on or be depended on by at least one other state.
    \item Every subsystem should include no more than $p$ states, where $p$ is chosen based on considerations such as computational resources available
    \item Subsystems should be ``chained'': each subsystem should share at least one state with another subsystem.
\end{itemize}

As a result, the decomposed system is represented by connected subgraphs of $G$ each representing a subsystem.
Let $\si$ be the set of state variables included in the $i^{th}$ subsystem, and let $\sic$ be the set of states that are not included in $i^{th}$ subsystem, i.e. $\forall i, \sic = S \setminus \si$.

For example, suppose that one requires the maximum dimensionality of subsystems to be two, $p=2$. 
We can decompose the 4D Quadruple Integrator into 3 subsystems $x_{1}, x_{2}, x_{3}$ with $S_{1}=\{ z_{1}, z_{2} \}, S_{2}=\{ z_{2}, z_{3} \}, S_{3}=\{ z_{3}, z_{4} \}$.
The result of the decomposition is shown in Eq. \eqref{equ:4dint_decompose}, and the corresponding State Dependency Graph representing subsystems is illustrated in Fig. \ref{fig:decompose_sdg_4dint}.
\begin{align}\label{equ:4dint_decompose}
    S_1: \dot{x_{1}}&=
    \begin{bmatrix}
        \dot{z_{1}}\\ 
        \dot{z_{2}}
    \end{bmatrix}
    =
    \begin{bmatrix}
        z_{2}+d\\ 
        z_{3}
    \end{bmatrix}, d \in \mathcal D, z_{3}(s) \in R_{z_{3}}(z_{2}, s)
\nonumber \\
    S_2: \dot{x_{2}}&=
    \begin{bmatrix}
        \dot{z_{2}}\\ 
        \dot{z_{3}}
    \end{bmatrix}
    =
    \begin{bmatrix}
        z_{3}\\ 
        z_{4}
    \end{bmatrix}, z_{4}(s) \in R_{z_{4}}(z_{3}, s)
\nonumber \\
    S_3: \dot{x_{3}}&=
    \begin{bmatrix}
        \dot{z_{3}}\\ 
        \dot{z_{4}}
    \end{bmatrix}
    =
    \begin{bmatrix}
        z_{4}\\ 
        u
    \end{bmatrix}, u \in \mathcal U
\end{align}

There may be missing state components in subsystems, e.g. $z_3 \notin S_1$. 
To guarantee safety, we assume the worst case for the missing states by treating them as virtual disturbances, which leads to an over-approximated BRT that is conservative in the right direction \cite{mitchell2003overapproximating}.
To avoid excessive conservatism, the virtual disturbances are bounded by concurentlly computed BRT over-approximations from other subsystems.

Formally, consider some subsystem $S_i$, and let $R_{z_j}(\xii,s)$ denote the range of the missing state $z_j \notin S_i$.
Suppose $z_j \in S_k$ with $S_k$ being chained with $S_i$, $S_k \cap S_i \neq \emptyset$.
Note that $k$ is not unique, as $z_j$ may be a state of many different subsystems.
Furthermore, let $W_{k}(x_k,s)$ be the value function for the subsystem $S_k$ at the time $s$.
  Then, $R_{z_j}(\xii,s)$ is determined from $W_{k}(x_k,s)$ as follows:
\begin{multline}\label{equ:find_missing_states}
    R_{z_j}(\xii,s) = \left \{ z_j | W_{k}(x_{k}, s ) \leq 0,\right. 
    \\
    \left. \forall k \text{ such that } z_j \in S_{k} \wedge S_{k} \cap S_{i} \neq \emptyset \right \}
\end{multline}

Our method also provides a simple way to adjust the trade off between computational burden and degree of conservatism. 
Depending on different requirements for computational resources and approximation accuracy, one can easily switch between having higher-dimensional subsystems (larger $p$) for which BRTs are more accurate, and having lower-dimension subsystems (smaller $p$) for which BRTs are faster to compute.




\begin{table*}[!htb]
\vspace{5pt}
\caption{Decomposition suggestions for 5D Car and 6D Planar Quadrotor}
\resizebox{\linewidth}{!}{
\begin{tabular}{|c|c|c|c|c|}
\hline
\label{table1}
System configuration                                                                                                                                                           & System dynamics & State Dependency Graph & Decomposed State Dependency Graph & Time and space \\ \hline
\begin{tabular}[c]{@{}l@{}}
5D Car\\ $(x,y)$-position\\ $\theta$ - heading\\ $v$ - speed\\ $\omega$ - turn rate\\ $u_{a}$ - accel. control \\ $u_{\alpha}$ - ang. accel. control \\
\end{tabular}                         &\begin{math}\begin{bmatrix}
\dot{x}\\ 
\dot{y}\\ 
\dot{\theta}\\ 
\dot{v}\\
\dot{\omega}
\end{bmatrix}
=
\begin{bmatrix}
v \cos \theta\\ 
v \sin \theta\\ 
\omega\\
u_{a}\\
u_{\alpha}
\end{bmatrix} \end{math}                &   
    \begin{minipage}{0.18\textwidth}
    \centering
    \includegraphics[scale=0.35]{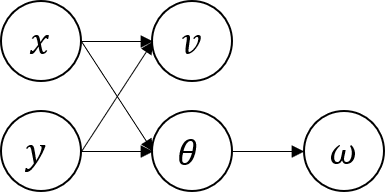}
\end{minipage}           & 
  \begin{minipage}{0.35\textwidth}\centering
  \vspace{0.2cm}
    \includegraphics[scale=0.35]{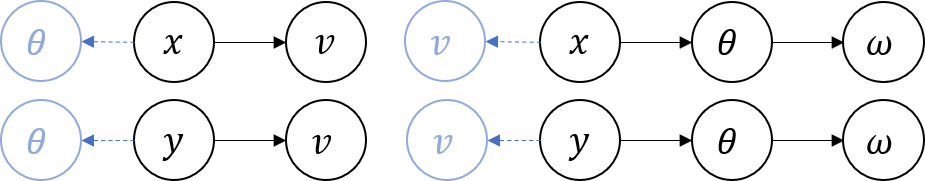}
    \vspace{0.2cm}
\end{minipage}                                      &  \begin{tabular}[c]{@{}l@{}}
 Ground truth: \\ 
 both $O(k^{5})$ \\
 \\
Decomposition: \\
$O(k^{4})$ and $O(k^{3})$  \\
\end{tabular}                                           \\ \hline
\begin{tabular}[c]{@{}l@{}}6D Planar Quadrotor\\ $(x,y)$-position\\ $(v_{x},v_{z})$ - velocity\\ $\theta$ - pitch\\ $\omega$ - pitch rate\\ $u_{T}$ - thrust control\\ $u_{\tau}$ - ang.accel.control \end{tabular} & \begin{math}\begin{bmatrix}
\dot{x}\\ 
\dot{z}\\ 
\dot{v_{x}}\\ 
\dot{v_{z}}\\
\dot{\theta}\\
\dot{\omega}
\end{bmatrix}
=
\begin{bmatrix}
v_{x}\\ 
v_{z}\\ 
-u_{T}\sin \theta\\ 
u_{T} \cos \theta - g\\
\omega\\
u_{\tau}
\end{bmatrix} \end{math}                             &    \begin{minipage}{0.18\textwidth} \centering
    \includegraphics[scale=0.35]{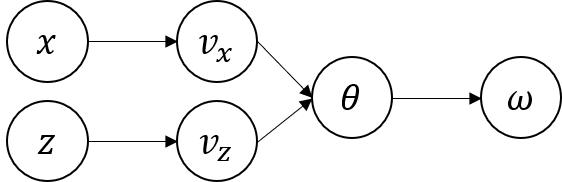}
\end{minipage}            &
\begin{minipage}{0.35\textwidth}\centering
    \vspace{0.2cm}
    \includegraphics[scale=0.35]{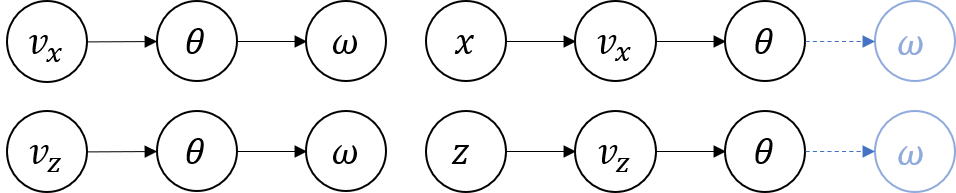}
    \vspace{0.2cm}
\end{minipage}            &  \begin{tabular}[c]{@{}l@{}}
 Ground truth: \\
 both $O(k^{6})$ \\
 \\
Decomposition: \\ 
$O(k^{4})$ and $O(k^{3})$  \\
\end{tabular}                                \\ \hline
\end{tabular}
}
\end{table*}

\begin{algorithm*}
  \caption{Approximating full-dimensional BRTs with chained subsystems}
    \begin{algorithmic}[1]
        \Require System dynamics $f(z,u,d)$ described as \eqref{equ:nonlinear_dyns} and a function $l(z)$ representing the target set $\mathcal T$\\
        Initialize the full-dimensional final time value function $V(z, 0)$ as \eqref{equ:hj_equation} \\
        Decompose the entire system into chained subsystems $S_{1}, S_{2}, S_{3},...S_{n}$, based on Section \ref{sec:decomposition} \\
        Initialize the final time value functions $\wifinal$ for each subsystem $\si$ based on \eqref{equ:subsys_final_valfunc} 
        \For{$(s = 0; s \geq s_{0}; s=s- \Delta s)$}
            \State \textbf{for each subsystem $S_{i}$} 
            \Indent
                \State Find the range $R_{z_{j}}(\xii,s)$ of the missing states $z_{j}$ at time $s$ based on \eqref{equ:find_missing_states}
                \State Obtain $\wis$ by solving the HJ equation in \eqref{equ:hj_subsys_each_time}
            \EndIndent
        \EndFor \\
        Obtain the approximated $\tilde V(z,s_{0})$ based on \eqref{equ:init_time_valfunc} \\
        Obtain the approximated full-dimensional BRT from the zero sub-level set of $\tilde V(z,s_{0})$ \\
        For any time $s$, obtain the optimal controller as \eqref{equ:compute_full_val_at_s} and \eqref{eq:opt_ctrl_approx}
\end{algorithmic}
\label{Algo1}
\end{algorithm*}
Although in general it may not be possible to decompose arbitrary dynamical systems in the form of \eqref{equ:nonlinear_dyns} in a way that saves computation time, our approach is very flexible and can often successfully decompose many realistic system dynamics. 
We demonstrate the method by decomposing the high-dimensional, tightly coupled 6D Bicycle in Section \ref{subsection:bicycle}. 
We also present decomposition suggestions for two other common system dynamics, 5D car \cite{rajamani2011vehicle} and 6D planar quadrotor \cite{singh2018robust}, in TABLE \ref{table1}.

\subsection{Backward Reachable Tube Computation}

We now present the procedure for over-approximating BRTs with low-dimensional chained subsystems $S_1,\ldots,S_m$. 
Given the target set $\mathcal{T}$ and the corresponding final condition to the HJ PDE \eqref{equ:hj_equation}, $l(z) = V(z,0)$, we  project the full-dimensional BRT onto the subspace of each subsystem $S_{i}$, and initialize the final time value function $\wifinal$ for the subsystem $\si$ using the projection operation in \eqref{equ:projection} as follows:
\begin{equation}\label{equ:subsys_final_valfunc}
  \wifinal =  \min_{z_{i} \in \sic} V(z,0)
\end{equation}

Then, given $W_{i}(x_{i},t)$ for some $t$, we compute the value function $\wis$ backwards in time for each subsystem following standard HJ PDE theory and level-set methods, while treating missing variables as virtual disturbances with appropriate bounds.
For each time step $s \in [t-\Delta s, t]$, $W_{i}(x_{i},s)$ is the viscosity solution of the following HJ partial differential equation:
\begin{equation}\label{equ:hj_subsys_each_time}
\begin{split}
    \min \{D_{s} \wis +H(x_{i}, \nabla \wis), \\ \wifinal - \wis \} =0,
\end{split}
\end{equation}

The Hamiltonian is given by
\begin{multline}\label{equ:subsys_hamiltonian}
    H(x_{i}, \nabla \wis) = \\
    \min_{\substack{d\in \mathcal D \\ z_{k} \in R_{z_{k}}(\xii,s), \forall z_{k} \in \sic}} \max_{u\in\mathcal U} \sum_{ z_{j} \in \si} \frac{\partial \wis}{\partial z_{j}} \cdot \frac{\partial z_{j}}{\partial s}
\end{multline}

\noindent where $R_{z_{i}}(\xii,s)$ is the range of missing states $\{z_{k}\}$ given in \eqref{equ:find_missing_states}.



This procedure starts at $t = -\Delta s$, and finishes when $W_{i}(x_{i}, s_0)$ is obtained.
Finally, we take the maximum of all the $\wiinit$ as the over-approximation of the full-dimensional initial time $\tilde V(z,s_{0})$:
\begin{equation}\label{equ:init_time_valfunc}
    \tilde V(z,s_{0}) = \max_{i} \ \wiinit
\end{equation}

In general for any time $s$, we also have over-approximated
\begin{equation}\label{equ:compute_full_val_at_s}
    \tilde V(z,s) = \max_{i} \ \wis.
\end{equation}



The optimal controller is given by
\begin{equation} \label{eq:opt_ctrl_approx}
    u^{*}(s) = \underset{u}{\argmax} \ \nabla \tilde V(z,s) ^\top f(z,u).
\end{equation}



The computation process is summarized in \textbf{Algorithm \ref{Algo1}}.

Consider our running example, the 4D Quadruple Integrator in \eqref{equ:4dint} and its subsystems in \eqref{equ:4dint_decompose}.
In particular, consider the BRT computation for subsystem $S_1 = \{z_1, z_2\}$. 
At the time $s$, the HJ equation in \eqref{equ:hj_subsys_each_time} for subsystem $S_{1}$ becomes
\begin{equation}\label{equ:hj_4dint_subsys1}
\begin{split}
    \min \left \{ \frac{\partial W_{1}(x_{1},s)}{\partial s} +
    \min_{z_{3} \in R_{z_{3}}(z_{2}, s)}  \  (\frac{\partial W_{1}(x_{1},s)}{\partial z_{1}} z_{2} \right.
    \\
    \left. + \frac{\partial W_{1}(x_{1},s)}{\partial z_{2}} z_{3}  ), W_{1}(x_{1},0)-W_{1}(x_{1},s) \right \}  = 0
\end{split}
\end{equation}

Here for the subsystem $S_{1}$ in \eqref{equ:4dint_decompose}, given $z_{2}$, we are able to find the range of $z_{3}$ in the subsystem $S_{2}$. Let $W_{2}(x_{2},s)$ be the value function for the subsystem $S_{2}$ at the time $s$, the range of $z_{3}$ given $z_{2}$ will be defined as $R_{z_{3}}(z_{2}, s)$: 
\begin{equation} \label{equ:4dint_search_dist}
    R_{z_{3}}(z_{2}, s) := \left \{ z_{3} | W_{2}(x_{2}, s) \leq 0 \right \}
\end{equation}

A graphical interpretation of \eqref{equ:4dint_search_dist} is in Fig. \ref{fig:search_dist}. 
For a specific grid point of $(z_{1}, z_{2})=(a, b) \in \mathbb{R}^{2}$ in the subsystem $S_{1}$, the range of the missing state $R_{z_{3}}(z_{2}, s)$ can be drawn from the subsystem $S_{2}$ with the corresponding $z_{2} = b$.

\begin{figure}
    \centering
    \includegraphics[width=0.6\linewidth]{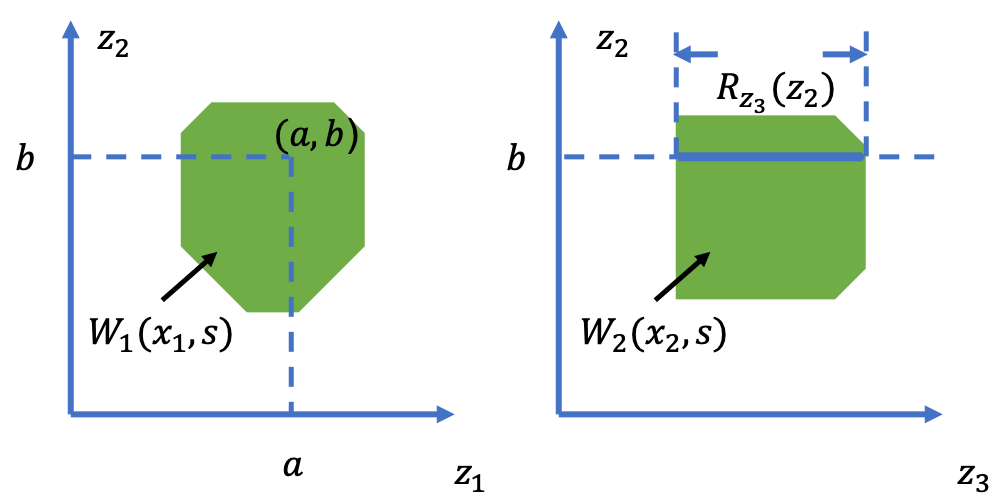}
    \caption{Searching missing states for 4D Quadruple Integrator. Left: a BRT described by $W_{1}(x_{1},s)$ for subsystem $S_{1}$. Right: a BRT described by $W_{2}(x_{2},s)$ for subsystem $S_{2}$. When solving on the grid point of $(z_{1}, z_{2})=(a, b)$ in subsystem $S_{1}$ at time $s$, we find the range of missing state  $R_{z_{3}}(z_{2}, s)$ inside the BRT from subsystem $S_{2}$.}
    \label{fig:search_dist}
    \vspace{-1.5em}
\end{figure}

\subsection{Proof and Discussions}

\subsubsection{\textbf{Proof of Correctness}}

In this section, we show that the BRT generated from our method is an over-approximation of the true BRT obtained from \eqref{equ:hj_equation}.
Let $\tilde V_{i}(z, s)$ denote the full-dimensional value function that back projected from $\wis$ at the time $s$, based on the projection operation in \eqref{equ:back_projection}:
\begin{equation}\label{equ:back_project_initialize}
    \tilde V_{i}(z, s) = \wis, \forall z_{i} \in \sic.
\end{equation}

Because at any time $s$, we maximize over $\wis$ to obtain the over-approximation, to prove the following Theorem 1 is sufficient to prove that each approximate value function $\tilde V_i$ is no larger than the true value function $V$.

\begin{theorem}
For any subsystem $S_{i}$ at any time step $s \in [t-\Delta s, t]$, $\tilde V_{i}(z, s) \leq V(z,s)$
\end{theorem}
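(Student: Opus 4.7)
The plan is to prove $\tilde V_i(z,s) \le V(z,s)$ by backward induction on $s$ from $0$ down to $s_0$, exploiting the fact that the subsystem game treats the missing states as free virtual disturbances and therefore amounts to a relaxation of the original full-system game.

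At the terminal time $s = 0$, the inequality is immediate from the projection initialization \eqref{equ:subsys_final_valfunc} together with the back-projection \eqref{equ:back_project_initialize}: $\tilde V_i(z,0) = W_i(x_i,0) = \min_{z_j \in S_i^c} V(z,0) \le V(z,0)$. For the inductive step, I would assume $\tilde V_j(z,t) \le V(z,t)$ for every subsystem $j$ and every $t$ strictly larger than $s$, and then compare the HJ PDEs \eqref{equ:hj_equation} and \eqref{equ:hj_subsys_each_time} over the time slab $[s,\, s+\Delta s]$. The key structural observation is that the subsystem Hamiltonian \eqref{equ:subsys_hamiltonian} minimizes over $d$ \emph{and} over virtual disturbances $z_k \in R_{z_k}(x_i,s)$, granting its min-player a strictly larger action set than in the full Hamiltonian \eqref{equ:hamiltonian}; moreover $\nabla \tilde V_i$ vanishes in every missing-state direction by \eqref{equ:back_project_initialize}, so both Hamiltonians act on exactly the same gradient coordinates. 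As long as the real $z_k$-coordinate of $z$ lies in $R_{z_k}(x_i,s)$, this yields the pointwise inequality $H_i(x_i,\nabla \tilde V_i) \le H(z,\nabla \tilde V_i)$, and the standard viscosity-solution comparison principle then propagates the bound back through one time step.

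The main obstacle, and the step I expect to require the most care, is rigorously certifying the containment $z_k \in R_{z_k}(x_i,s)$ that the Hamiltonian comparison needs. My plan is to close this using the inductive hypothesis itself: for each subsystem $S_k$ containing the missing state $z_k$, the induction gives $W_k(x_k,s) = \tilde V_k(z,s) \le V(z,s)$, so whenever $V(z,s) \le 0$ (the regime that actually defines the BRT) we automatically have $W_k(x_k,s) \le 0$, which by \eqref{equ:find_missing_states} places $z_k$ in $R_{z_k}(x_i,s)$. For states with $V(z,s) > 0$, the ``freezing'' clause $V(z,0)-V(z,s)$ in \eqref{equ:hj_equation} pins both $V$ and $\tilde V_i$ at their terminal values, so the base case already supplies the bound there. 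Assembling these pieces closes the induction and establishes $\tilde V_i(z,s) \le V(z,s)$ globally in $z$ and $s$, which, combined with \eqref{equ:compute_full_val_at_s}, is exactly what the theorem asserts and gives the guaranteed-safe over-approximation claimed for the algorithm.
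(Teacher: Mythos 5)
Your overall architecture is the same as the paper's: backward induction in time, with the base case \eqref{equ:final_inequality} following from the projection initialization \eqref{equ:subsys_final_valfunc}, an auxiliary full-dimensional solution evolved from the back-projected value (the paper's $\tilde V_{i}^{*}$), a pointwise Hamiltonian comparison exploiting $\frac{\partial \tilde V_{i}}{\partial z_{j}} = 0$ for $z_{j} \in \sic$, and the viscosity comparison principle to propagate the ordering across each slab $[t-\Delta s, t]$. Where you genuinely depart from the paper is in confronting the restriction of the virtual-disturbance minimization to $R_{z_k}(\xii,s)$: the paper's proof simply asserts $H(x_{i}, \nabla \wis) = \min_{\forall z_{i} \in \sic} H(z, \nabla \tilde V_{i}^{*}(z, s))$, i.e., it treats the minimum as ranging over \emph{all} values of the missing states, for which the inequality \eqref{equ:inequality2} is unconditional. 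You correctly observe that with the minimum restricted to $R_{z_k}$, the pointwise inequality requires the actual coordinate $z_k$ of the point $z$ to lie in $R_{z_k}(\xii,s)$, and you strengthen the inductive hypothesis to all subsystems simultaneously to obtain that containment on $\{z : V(z,s) \le 0\}$. That is a real subtlety the paper glosses over, and your instinct to close it via the concurrent induction is sound.

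However, your patch for the complementary region has a genuine gap. The claim that where $V(z,s) > 0$ the ``freezing'' clause $V(z,0)-V(z,s)$ in \eqref{equ:hj_equation} pins both $V$ and $\tilde V_{i}$ at their terminal values is false: the variational inequality only enforces $V(z,s) \le V(z,0)$, and wherever $V(z,s) < V(z,0)$ it is the PDE term $D_{s}V + H = 0$ that is active. For the min-over-time value \eqref{equ:final_valfunc}, a state with $l(z) > 0$ whose worst-case trajectory approaches the target without reaching it has $V(z,s)$ strictly decreasing backward in time while remaining positive, so nothing is frozen there and the base case supplies no bound. Consequently your Hamiltonian inequality is certified only on the sub-level set $\{V(z,s) \le 0\}$, and the standard comparison principle cannot be invoked on that set alone: it is a time-varying region with a free boundary, and values from the region $V > 0$ propagate into it along characteristics, so an ordering that holds only inside the set does not close the inductive step. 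To repair the argument you would either prove the theorem with $R_{z_k}$ replaced by a set guaranteed to contain the true missing coordinate everywhere (which is in effect what the paper's proof does, at the cost of not actually justifying the restricted ranges used in Algorithm \ref{Algo1} and \eqref{equ:find_missing_states}), or weaken the statement to a containment of zero sub-level sets and give a separate, e.g.\ trajectory-based, argument for it. A smaller wrinkle in the same direction: within a slab, $R_{z_k}(\xii,s)$ is built from $W_{k}$ at the slab's start time, so even on $\{V \le 0\}$ your containment argument needs the missing coordinate of the evolving state to remain in a set frozen at time $t$, which the induction by itself does not deliver.
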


\begin{proof}
We prove this by mathematical induction. 
For any subsystem $S_{i}$, we first show that the Theorem at final time $s=0$ is true. 
Then we prove that for any time step $s \in [t-\Delta s, t]$, if $\tilde V_{i}(z, t) \leq V(z,t)$, we will have $\tilde V_{i}(z, t - \Delta s) \leq V(z,t - \Delta s)$.

At the final time $s=0$, $\wifinal$ is initialized as \eqref{equ:subsys_final_valfunc} and $\tilde V_{i}(z, s)$ is initialized as \eqref{equ:back_project_initialize}, thus trivially we have 
\begin{equation}\label{equ:final_inequality}
    \tilde V_{i}(z, 0) \leq V(z, 0).
\end{equation}

For any time step $s \in [t-\Delta s, t]$, $V(z,s)$ is the viscosity solution of \eqref{equ:hj_equation} with final value $V(z, t)$. 
Let $\tilde V_{i}^{*}(z, t-\Delta s)$ be the viscosity solution of \eqref{equ:hj_equation} with final value $\tilde V_{i}(z, t)$.
Since 
\begin{equation*}
    \tilde V_{i}(z, t) \leq V(z,t),
\end{equation*}{}
we have
\begin{equation}\label{equ:inequality1}
\vspace{5pt}
    \tilde V_{i}^{*}(z, t-\Delta s) \leq V(z, t-\Delta s)
\end{equation}

Let $W_{i}(\xii, s)$ be the viscosity solution of \eqref{equ:hj_subsys_each_time} at $s \in [t-\Delta s, t]$ with final value $W_{i}(\xii, t)$. 
When solving $W_{i}(\xii, t - \Delta s)$, the Hamiltonian $H(x_{i}, \nabla \wis)$ is computed as \eqref{equ:subsys_hamiltonian}. 
For comparison, when solving $\tilde V_{i}^{*}(z, t-\Delta s)$, the Hamiltonian $H(z, \nabla \tilde V_{i}^{*}(z, s))$ is computed as \eqref{equ:hamiltonian}.

Because $\tilde V_{i}(z, t)$ is back projected from $W_{i}(\xii, t)$ as \eqref{equ:back_project_initialize}, in $H(z, \nabla \tilde V_{i}(z, s))$ we have $\frac{\partial \tilde V_{i}(z, t)}{\partial z_{i}}=0, \forall z_{i} \in \sic$.
In addition, missing states are treated as disturbances in $H(x_{i}, \nabla \wis)$, so $H(x_{i}, \nabla \wis) = \min_{\forall z_{i} \in \sic} H(z, \nabla \tilde V_{i}^{*}(z, s))$. Therefore,
\begin{equation}\label{equ:inequality2}
    H(x_{i}, \nabla \wis) \leq H(z, \nabla \tilde V_{i}^{*}(z, s)), \forall z_{i} \in \sic.
\end{equation}

Thus we obtain
\begin{equation}\label{equ:inequality3}
    W_{i}(\xii, t - \Delta s) \leq \tilde V_{i}^{*}(z, t-\Delta s), \forall z_{i} \in \sic.
\end{equation}

Because $\tilde V_{i}(z, t-\Delta s)$ is back projected from $W_{i}(\xii, t - \Delta s)$ as \eqref{equ:back_project_initialize}, we have
\begin{equation}\label{equ:inequality4}
    \tilde V_{i}(z, t-\Delta s) \leq \tilde V_{i}^{*}(z, t-\Delta s).
\end{equation}

Finally, for any time step $s \in [t, t-\Delta s]$, we combine \eqref{equ:inequality1} and \eqref{equ:inequality4} and obtain
\begin{equation}
    \tilde V_{i}(z, t-\Delta s) \leq V(z, t-\Delta s).
\end{equation}



\end{proof}

\begin{figure}[ht]
\vspace{-1.5em}
\centering
\includegraphics[width=0.4\columnwidth]{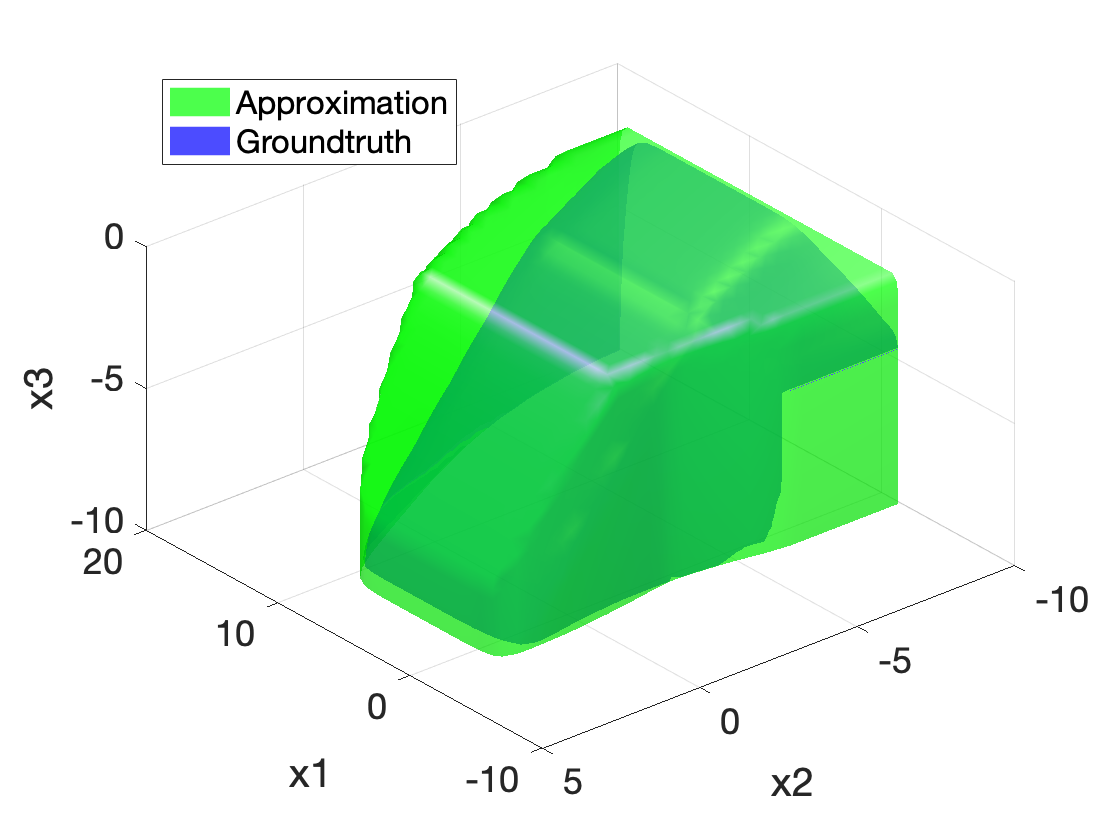}
\includegraphics[width=0.4\columnwidth]{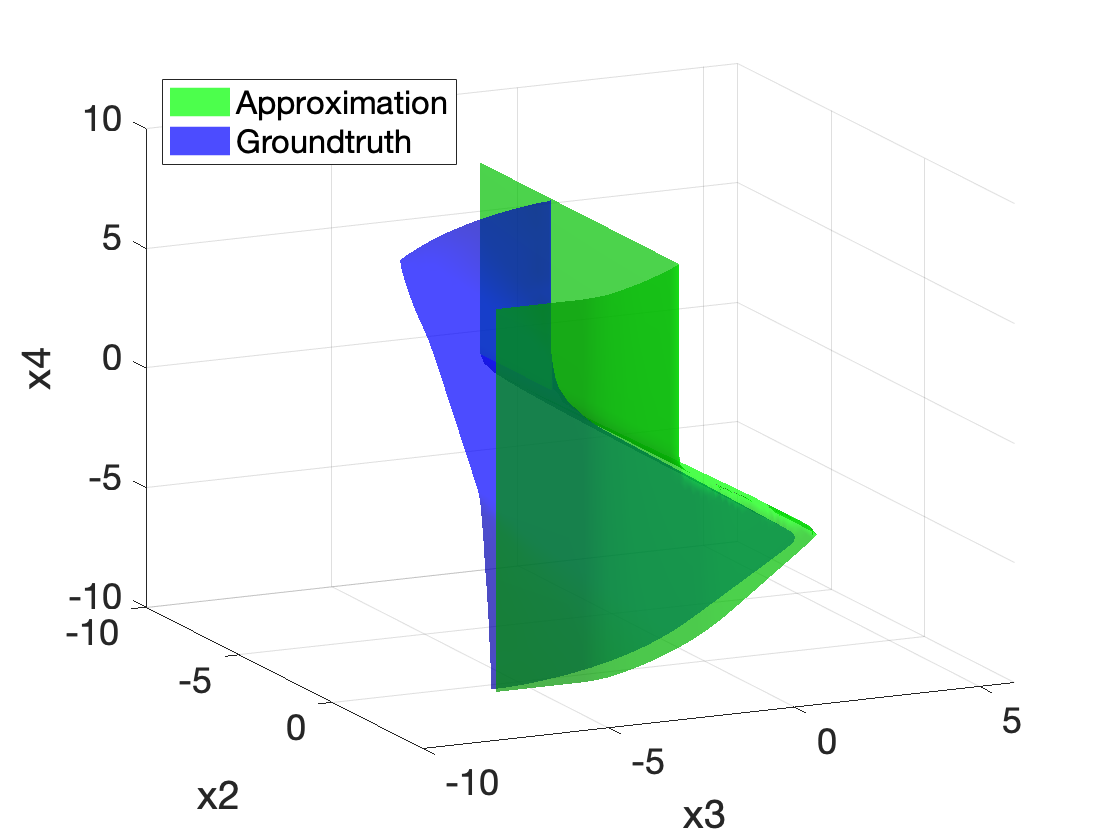}
\includegraphics[width=0.4\columnwidth]{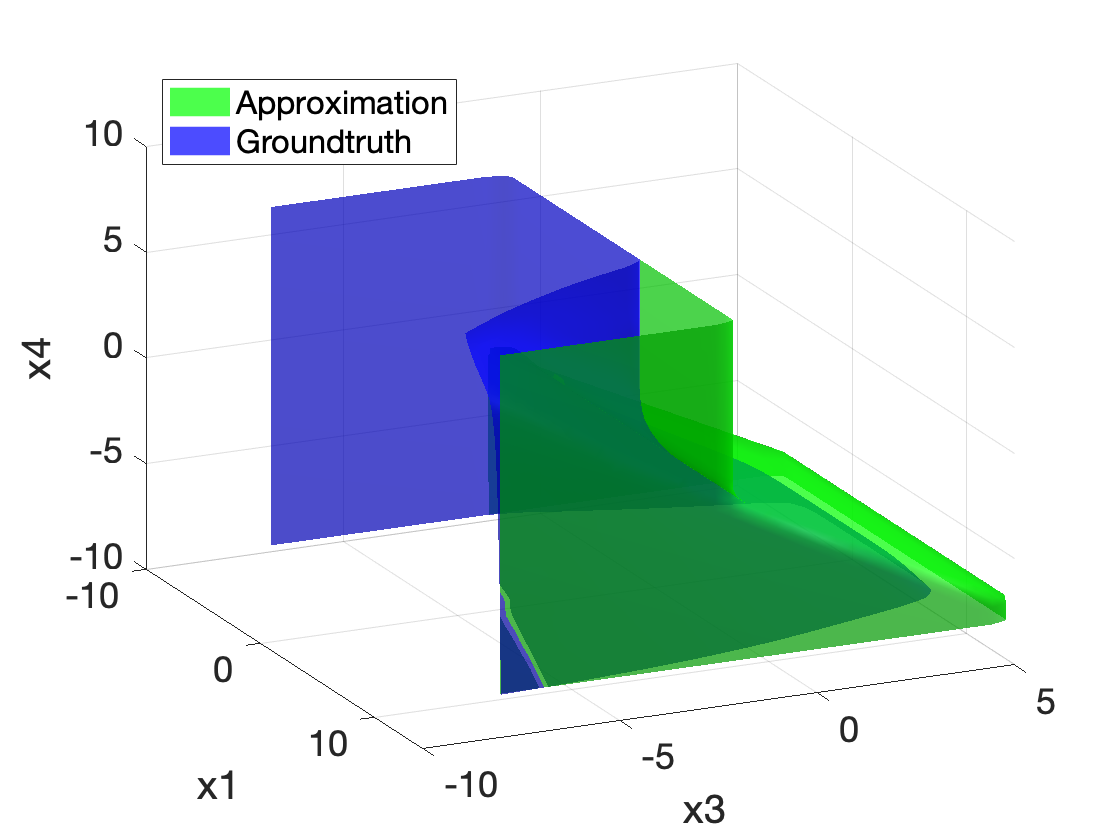}
\caption{Comparison of ground truth BRTs (blue) and our approximated BRTs (green) for 4D Quadruple Integrator at $s=-1$. From top left, top right to bottom are 3D slices at $z_{4}=-2, z_{1}=2.6, z_{2}=-4.2$.}
\label{fig:4dint_brt}
\vspace{-1em}
\end{figure}

\subsubsection{\textbf{Computation complexity}} \label{subsubsec:comp_complexity}

Let $k$ be the number of grid points in each dimension for the numerical computation.
The computational space complexity is determined by the largest dimension of subsystems. 
If each subsystem $S_i$ has $N_i$ states, the space complexity is $O(k^{\max_i N_{i}})$.

For computation time, there are two non-trivial parts: solving the HJ PDE and searching the missing states.
If the HJ PDE is solved on a grid with $O(k^{N_{i}})$ grid points in subsystem $S_i$ with a search over an $M_i$-dimensional grid, then these nested loops have a time complexity of $O(k^{N_{i}+M_i})$. 
Overall, the upper limit of the computation time will be the longest time among all subsystems, $O(k^{\max_i \{N_{i}+M_i\}})$.

\subsubsection{\textbf{Target sets}}

In our decomposition method, we require a clear boundary of the target in each subsystem.
In addition, due to shared controls and disturbances in subsystems, the entire target should be the intersections of all targets from each subsystem to ensure the conservative approximation of BRT, according to \cite{chen2018decomposition}.

\section{Numerical Experiments} \label{sec:exp}


We first demonstrate our method on the running example, 4D Quadruple Integrator.  
By comparing our approximate BRT to ground truth BRT obtained from the full-dimensional computation, we show that our method maintains the safety guarantee without introducing much conservatism. 
Then we focus on the higher-dimensional, heavily coupled 6D Bicycle model \cite{singh2018robust}, and present for the first time a conservative but still practically useful BRT in a realistic simulated autonomous driving scenario.
All numerical experiments are implemented on an AMD Ryzen 9 3900X 12-Core Processor with ToolboxLS \cite{mitchell2007toolbox} and helperOC toolbox.

\subsection{4D Quadruple Integrator}


The system dynamics of the 4D Quadruple Integrator and its decomposed subsystems are given in \eqref{equ:4dint} and \eqref{equ:4dint_decompose}. 
Starting from the target set $\mathcal{T}$ in \eqref{eq:4dtarget}, we compute the approximate BRT for a time horizon of $1.0$ second,
\begin{equation} \label{eq:4dtarget}
    \mathcal{T} := \{(z_{1}, z_{2}, z_{3}, z_{4}) \ | \ -6 < z_{1} < 6, z_{2} < -4, z_{3} < -2 \}
\end{equation}

In Fig. \ref{fig:4dint_brt}, we visualize the 4D BRT through 3D slices at the initial time $s=-1.0$. 
From top left, top right to bottom, our approximated BRTs (green) and the ground truth BRTs (blue) are shown, at the slices of $z_{4} = -2$, $z_{1} = 2.6$, and $z_{2} = -4.2$ respectively. 
The results show that our approximated BRTs are similar in shape to the ground truth BRT while being a little bigger, which indicates that our results are conservative in the right direction: if a state is outside of the approximate BRT, it is guaranteed to be safe.

According to Sec. \ref{subsubsec:comp_complexity}, for the 4D Quadruple Integrator, we need to solve on a two dimensional grid with a search on another two dimensional grid in subsystem $S_{1}$ and $S_{2}$, thus the computation space and time are $O(k^{2})$ and $O(k^{3})$.
To compare, computing ground truth BRTs in the full dimension space will cost $O(k^{4})$ both on space and time.

In our experiment, it takes $2.5$ seconds to compute approximation from decomposition, while it takes $420$ seconds to compute the ground truth in full dimension.

\subsection{6D Bicycle}\label{subsection:bicycle}
    

To illustrate the utility of our method on decomposing high-dimensional and heavily coupled systems, we present the first practically usable minimal BRT computation for the 6D Bicycle, a model widely used to approximate the behaviour of four-wheeled vehicles such as autonomous cars.

\subsubsection{\textbf{Problem Setup}}

The system dynamics is given in \eqref{equ:6d_bidycle}. 
$X$ and $Y$ denote position in the global frame,
$\psi$ denotes the orientation angle with respect to the $X$ axis\footnote{Computation bound for $\psi$ is $[\pi/4, 9\pi/4]$}, $v_{x}$ and $v_{y}$ denote the longitudinal and lateral velocities, and $\omega$ denotes the angular speed. 
The controls are $\delta_{f}$ and $a_{x}$, which represent the steering angle and longitudinal acceleration, respectively.
\begin{equation} \label{equ:6d_bidycle}
    \begin{bmatrix}
\dot{X}\\ 
\dot{Y}\\ 
\dot{\psi}\\ 
\dot{v_{x}}\\
\dot{v_{y}}\\
\dot{\omega}
    \end{bmatrix}
=
    \begin{bmatrix}
v_{x} \cos{\psi} - v_{y} \sin{\psi}\\ 
v_{x} \sin{\psi} + v_{y} \cos{\psi}\\ 
\omega\\ 
\omega v_{y} + a_{x}\\ 
-\omega v_{x} + \frac{2}{m} (F_{c,f} \cos{\delta_{f}} + F_{c, r})\\ 
\frac{2}{I_{z}}(l_{f}F_{c,f} - l_{r} F_{c,r})
    \end{bmatrix}
\end{equation}

To decompose 6D Bicycle, we set the space and time limits to be $O(k^4)$ for best possible accuracy.
Based on the State Dependency Graph for 6D Bicycle in Fig. \ref{fig:sdg_6dbicycle}, we choose the subsystems in \eqref{eq:6d_decomp} with the corresponding decomposed State Dependency Graph shown in Fig. \ref{fig:sdg_6dbicycle_decompose}, which requires $O(k^3)$ space and $O(k^4)$ time complexity.
\begin{equation}\label{eq:6d_decomp}
    \begin{split}
        x_{1} &= (X, v_{x}, v_{y}), x_{2} = (Y, v_{x}, v_{y}), x_{3} = (X, \psi), \\
        x_{4} &= (Y,\psi), x_{5} = (v_{x}, v_{y}, \omega), x_{6} = (\psi, \omega)
    \end{split}
\end{equation}
\begin{figure}[htbp]
\vspace{-1em}
\begin{subfigure}[b]{.3\linewidth}
    \parbox[][3cm][c]{\linewidth}{
    \centering
    \includegraphics[scale=0.32]{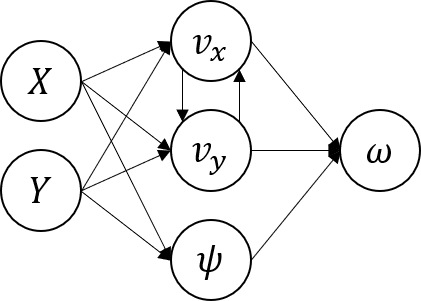}}
    \caption{}
    \label{fig:sdg_6dbicycle}
\end{subfigure}
\quad
%
%
\begin{subfigure}[b]{.6\linewidth}
    \centering
    \includegraphics[scale=0.32]{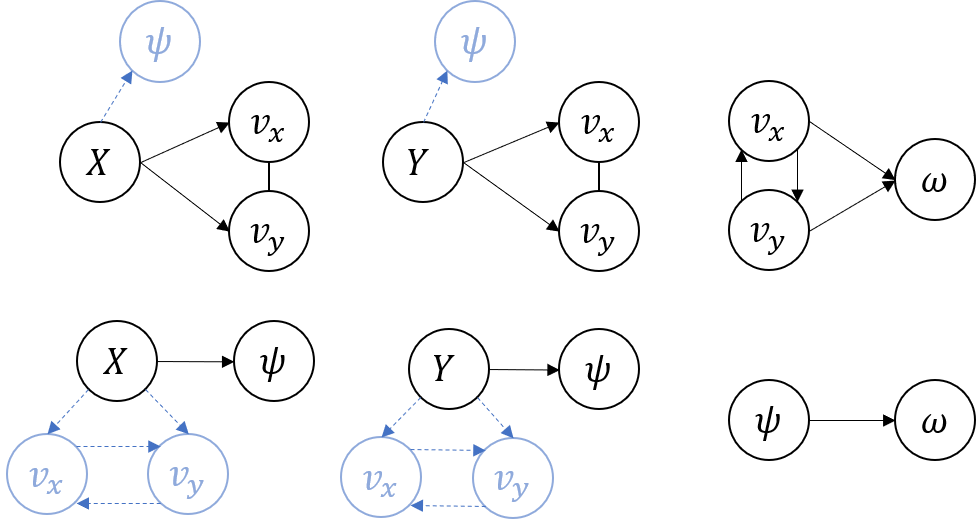}
    \caption{}
    \label{fig:sdg_6dbicycle_decompose}
\end{subfigure}
\caption{($a$) The State Dependency Graph for 6D Bicycle. ($b$) A decomposed State Dependency Graph for 6D Bicycle. The light blue vertices and edges indicate the missing states and their dependency.}
\end{figure}

We design the target set $\mathcal{T}$ with respect to $X$, $Y$, $\psi$ and $v_{x}$ in \eqref{equ:target_6d_bicycle}. 
This target set represents a one way road surrounded by an open area in a parking lot depicted as Fig. \ref{fig:target_6d_bicycle}.
In the one way road, only a positive forward speed and a forward orientation range is allowed\footnote{Combined with the computation bound, the safe orientation range are $[-\pi /4, \pi /4]$.}.
\begin{equation}\label{equ:target_6d_bicycle}
\begin{split}
     \mathcal{T} := \{(X, Y, \psi, v_{x}, v_{y}, \omega) \ | \ -6 < X < 6, \\ -2 < Y < 2, \psi < 7\pi / 4 , v_{x} < 0 \}
\end{split}
\end{equation}
\begin{figure}
    \vspace{5pt}
    \centering
    \includegraphics[width=0.65\linewidth]{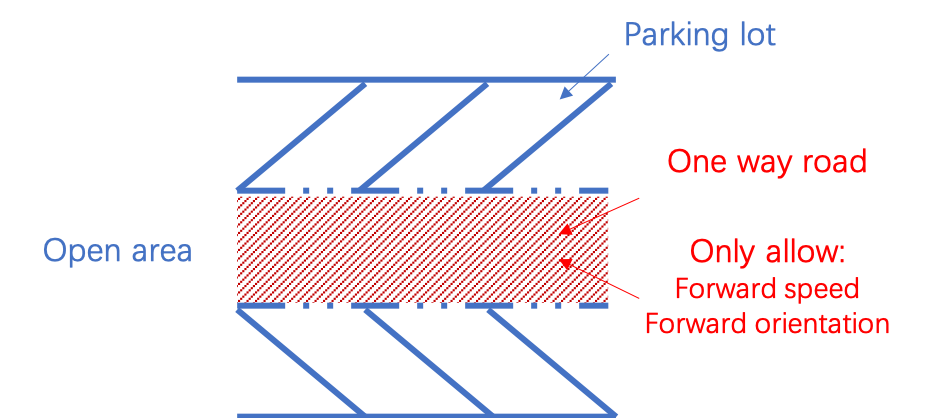}
    \caption{A target set example for 6D Bicycle. Inside a parking lot, there is a one-way road surrounded by open areas, where only a positive forward speed and a forward orientation range are allowed.}
    \label{fig:target_6d_bicycle}
    \vspace{-0.5em}
\end{figure}
We compute the BRT for a time horizon of $2$ seconds.

\subsubsection{\textbf{BRT Result}}

To visualize the 6D BRT at $t=-2.0$, we present two 3D slices in Fig. \ref{fig:6d_bicycle_brt_1}.

Fig. \ref{fig:6d_bicycle_brt_1} shows the BRT at the slice of $\psi=\pi/4,\omega = -1.1$, $v_{y}=0$ (left) and $18$ (right), indicating the range of $v_{x}$ to avoid for different $X$ and $Y$. 
As shown, the farther from the area $\{(x,y)|-6<x<6, -2<y<2)\}$, the smaller set of $v_{x}$ needs to be avoided to maintain safety. 
This is because if the agent is far from the unsafe positions, it has more time and space to slow down and adjust $v_{x}$. 
In comparison, in the right plot, the agent has a larger  $v_{y}=18$, and thus has a larger BRT in $v_{x}$ to avoid, especially in the $Y$ direction.


In our experiment, it takes $17$ minutes to compute the approximated BRT with the decomposition method.

{\color{red}
}
\begin{figure}
    \centering
    \begin{subfigure}[b]{0.4\linewidth}
    \centering
    \includegraphics[width=\columnwidth]{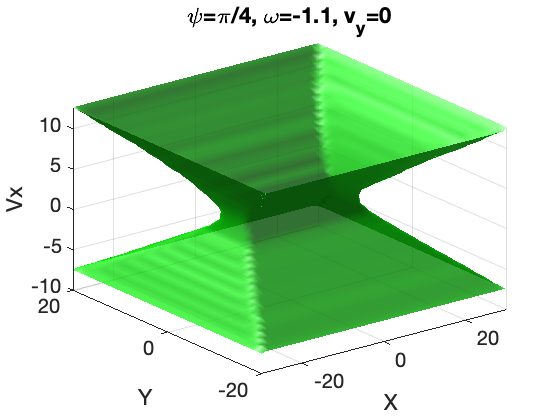}
    \label{fig:exp1_1}
    \end{subfigure}
    \begin{subfigure}[b]{0.4\linewidth}
    \centering
     \includegraphics[width=\columnwidth]{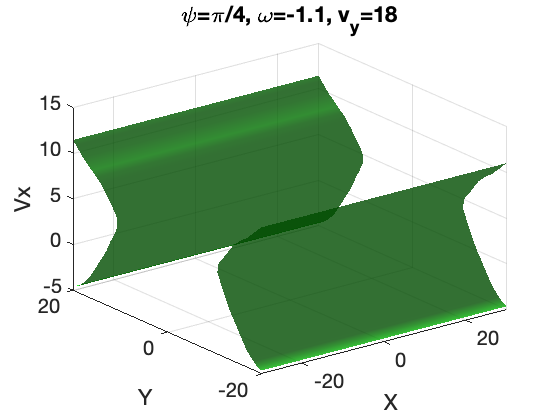}
    \label{fig:exp2_1}
    \end{subfigure}
    \caption{3D slices of $(X, Y, v_{x})$ from 6D BRT at $s=-2$. Left: slice at $\psi=\pi/4,\omega = -1.1, v_{y}=0$. Right: slice at $\psi=\pi/4,\omega = -1.1, v_{y}=18$}
    \label{fig:6d_bicycle_brt_1}

    \vspace{-1.5em}
\end{figure}

\subsubsection{\textbf{Safety-Preserving Trajectories}}

With the evolution of the BRT over time, we illustrate that trajectories synthesized using Eq. \eqref{eq:opt_ctrl_approx} are guaranteed safe when starting outside the approximated BRTs, and may enter the targets when starting inside the approximated BRTs.

{\color{red}
}
\begin{figure}[!htb]
  \centering
  \begin{subfigure}{.4\linewidth}
    \centering
    \includegraphics[width = \linewidth]{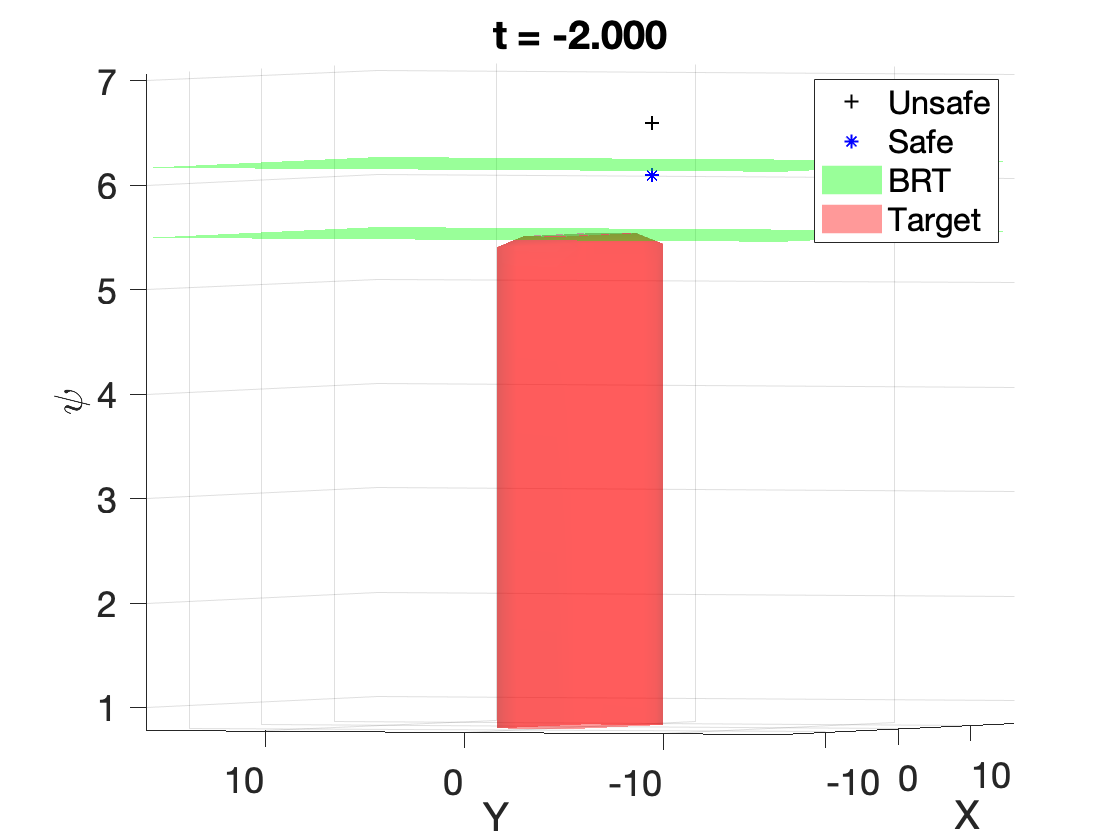}
  \end{subfigure}%
  \hspace{0.1pt}
  \begin{subfigure}{.4\linewidth}
    \centering
    \includegraphics[width = \linewidth]{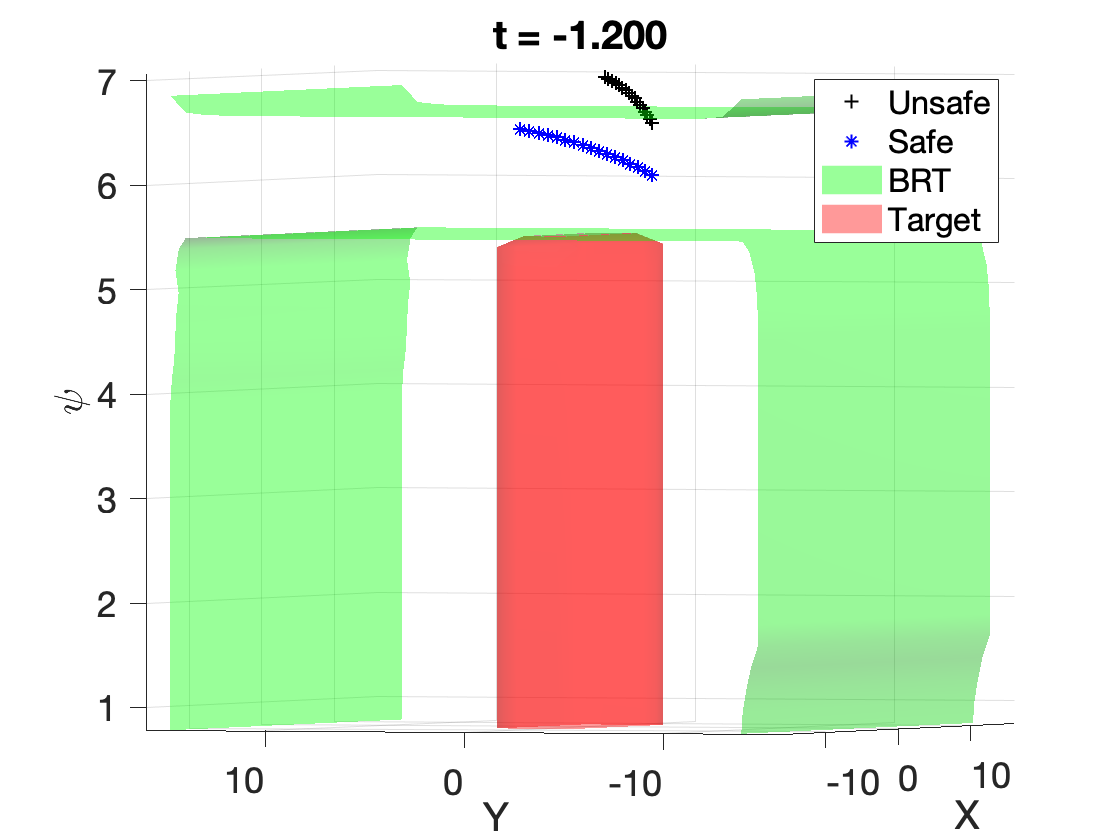}
  \end{subfigure}%
  \vfill
  \begin{subfigure}{.4\linewidth}
    \centering
    \includegraphics[width = \linewidth]{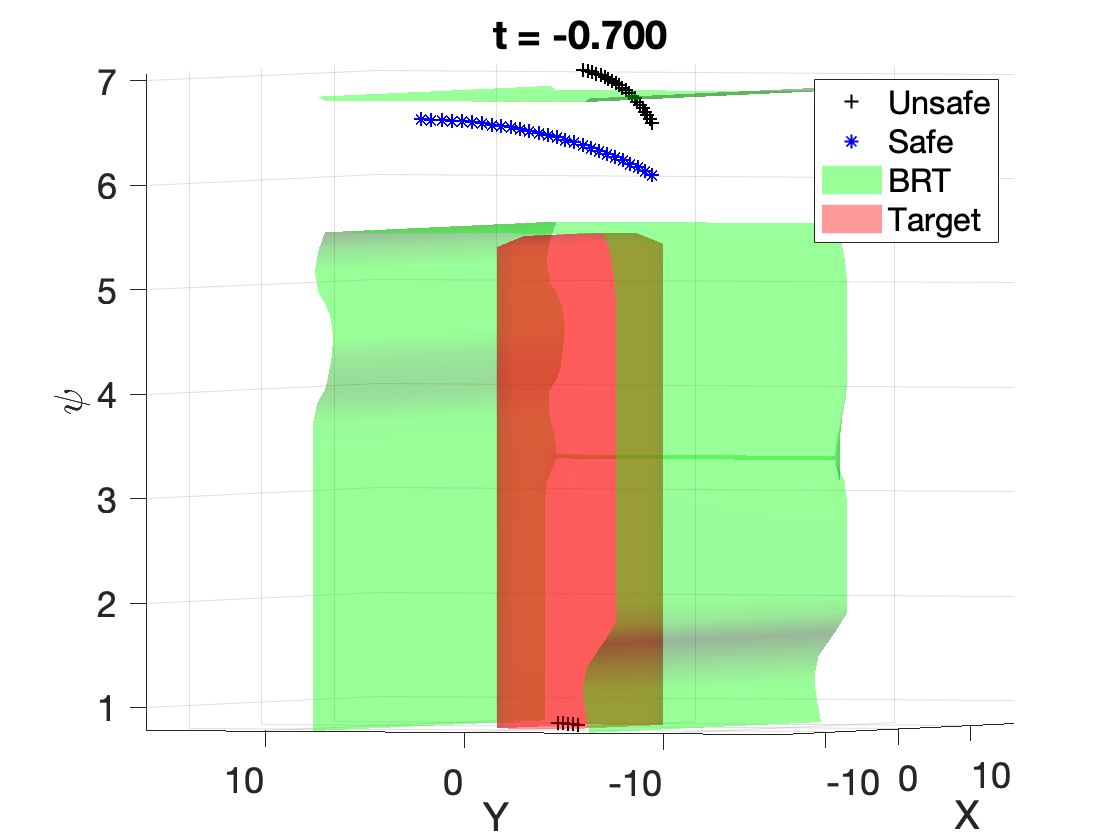}
  \end{subfigure}
  \hspace{0.1pt}
  \begin{subfigure}{.4\linewidth}
    \centering
    \includegraphics[width = \linewidth]{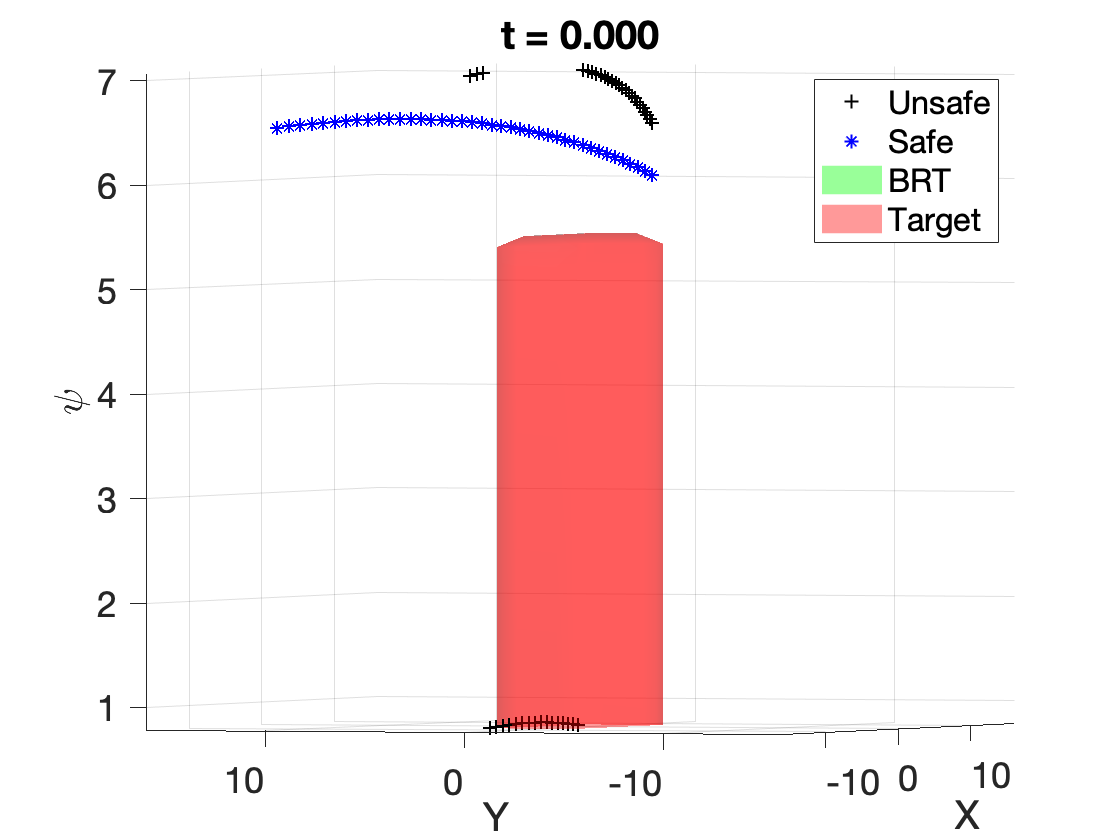}
  \end{subfigure}
  \caption{Comparison of a safe trajectory (blue) and an unsafe trajectory (black) of 6D Bicycle in $(X, Y, \psi)$ space within the time horizon of $2s$. The safe trajectory starts from outside the BRT (green), and successfully avoids all BRTs and the targets (red) within $2s$. The unsafe trajectory starts from inside the BRT and finally hits the target from bottom.}
  \label{fig:safe_traj_1}
  \vspace{-1em}
\end{figure}

In Fig. \ref{fig:safe_traj_1}, the trajectories are in $(X,Y,\psi)$ space. 
The safe initial condition (blue) starts from outside the BRT (green), while the unsafe one (black) starts inside. 
The initial $(v_{x}, v_{y}, \omega)=(-10,1,0.8)$ are the same for both agents.
As time moves forward, the blue trajectory can always stay outside of the BRT at the corresponding time, and avoids the target (red) during the time horizon of two seconds. However, the unsafe trajectory enters the target from the bottom of the plot at $s=-0.8$ (the $\psi$ dimension is periodic).

\section{Conclusion}\label{sec:conclusion}

We propose a decomposition method that largely alleviates the computation complexity for approximating minimal BRTs, without introducing much conservatism. 
Our method is able to analyze many loosely coupled, high-dimensional systems, and
we provide a simple way of making trade-offs between computational requirements and degree of conservatism. 

In the future,
we hope to explore more techniques such as in \cite{lee2019removing} to overcome the constraints for target sets.


\bibliography{references.bib}
\bibliographystyle{IEEEtran}

\end{document}